\theoremstyle{plain}
\newtheorem{theorem}{Theorem}
\newtheorem*{theorem*}{Theorem}
\newtheorem*{lemma*}{Lemma}
\newtheorem{lemma}{Lemma}
\theoremstyle{definition}
\newtheorem{definition}{Definition}
\theoremstyle{remark}
\title{Improving cardinality estimation of sums of sets with convexity}
\author{Jun Ikeda\footnote{jikeda@caltech.edu}}
\affil{California Institute of Technology, Pasadena, California, 91125, United States of America}
\begin{document}

\maketitle

\begin{abstract}
This paper presents a slight improvement of the estimate of sumsets of convex sets with negative discrete third derivative. The proposed method is based on some previous works in incidence geometry and use of spectrum method developed earlier for notable progresses in this fields. \\
Keywords: \textit{sumsets, convex sets, higher convexity, additive combinatorics.}
\end{abstract}

\section{\centering INTRODUCTION}
Let $(G, +)$ be an abelian group. For two finite subsets $A, B \subset G$, their sumset is defined as follows:
\begin{equation*}
    A+B = \{ a+b; (a, b) \in A \times B \}
\end{equation*}
Henceforward, $G$ is assumed to be a totally ordered abelian group. 

It is conjectured by Erd\"{o}s and Hegyv\'{a}ri that $|A+A| \gg_{\epsilon} |A|^{2-\epsilon}$ for convex sets $A$.
\begin{definition}
    Let $A=\{a_1 < \cdots < a_{|A|}\} \subset G$. $A$ is a convex set if for all $i$ 
    \begin{equation*}
        a_{i+2} - a_{i+1} > a_{i+1} - a_{i}.
    \end{equation*}
\end{definition}
The most classical lower bound, $|A+A| \gg |A|^{3/2}$, is given by Garaev with a very elementary geometric observation of convexity \cite{garaev2000lower}.
This has been improved a number of times using observations
on related combinatorial quantities such as $E (A)$ \cite{li2011theoremschoenshkredovsumsets}, \cite{schoen2011sumsets}, \cite{shkredov2012somenewineq}, \cite{shkredov2013new}, \cite{Shkredov2015OnSO}, \cite{Olmezov2020SharpeningAE}. 
\begin{definition}
    Let $A \subset G$. The second additive energy is
    \begin{align*}
        E(A, B) &= \#\{(a_1, b_1, a_2, b_2) \in (A \times B)^{2}; \\
        &\qquad\qquad a_1 - b_1 = a_2 - b_2 \} .
    \end{align*}
    Hereafter, $E(A)$ denotes $E(A, A)$.
\end{definition}

The current best record, $|A+A| \gg |A|^{\frac{30}{19}}$, is by Rudnev and Stevens, who carefully avoided $E(A)$, for which 
a sharp bound is yet to be proved \cite{Rudnev2020AnUO}.

It was noticed by Ol'mezov that for a subset of all convex sets, this can be significantly improved \cite{Olmezov2020AdditivePO}. 
\begin{definition}
    Let $A \subset G$. $A$ can be arranged in increasing order: $A = \left\{a_1 < a_2 < \cdots < a_{n}\right\}$. Then define (higher) derivative as follows:
    \begin{align*}
        \Delta A &= \left\{a_{i}^{\prime} = a_{i+1} - a_{i}\right\}, \\
        \Delta^{(k)} A &= \left\{a_{i}^{(k)} = a_{i+1}^{(k-1)} - a_{i}^{(k-1)}\right\}. \\
    \end{align*}
    $k$'th derivative of $A$ is positive if $a_{i}^{(k)} > 0$ for all $i$, and it is denoted as $\Delta^{(k)} A > 0$.
\end{definition}

Using this terminology, $A$ is convex
if and only if $\Delta^2 A > 0$.

\begin{theorem}[Ol’mezov]
    Let $A \subset G$. If $\Delta^2 A > 0$ and $\Delta^3 A \le 0$,
    \begin{equation}\label{olmezov negative third result}
        |A+A| \gg |A|^{\frac{8}{5}}.
    \end{equation}
    If $\Delta^2 A > 0, \Delta^3 A < 0,$ and $\Delta^4 A \le 0$,
    \begin{equation}\label{olmezov negative forth result}
        |A+A| \gg |A|^{\frac{5}{3}}.
    \end{equation}
\end{theorem}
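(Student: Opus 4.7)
The plan is to upper-bound the additive energy $E(A)$ and then invoke the Cauchy--Schwarz inequality $|A+A| \geq |A|^4/E(A)$ to deduce the stated bound on $|A+A|$. For the first assertion the target is $E(A) \ll |A|^{12/5}$, and for the second it is $E(A) \ll |A|^{7/3}$. The higher convexity hypothesis $\Delta^3 A \leq 0$ (respectively $\Delta^4 A \leq 0$) will be used to control the level sets of the representation function $r_{A-A}(x) = \#\{(a,b) \in A^2 : a-b = x\}$.

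The first step is a dyadic decomposition: letting $P_\tau = \{x \in A-A : \tau \leq r_{A-A}(x) < 2\tau\}$, one has $E(A) \ll \log|A| \cdot \max_\tau \tau^2 |P_\tau|$, so it suffices to bound $|P_\tau|$ in terms of $\tau$ and $|A|$. The bare convexity bound underlying Garaev's argument (Plain $\Delta^2 A > 0$) essentially gives $|P_\tau| \ll |A|^2/\tau^2$, which recovers only the $3/2$ exponent.

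The key step is a structural lemma refining this bound using $\Delta^3 A \leq 0$. For a convex $A$ with $\tau$ pairs realizing a given difference $d$, the index-gaps $k = i-j$ with $a_i - a_j = d$ are forced into a sequence whose spread is constrained: since $\Delta^2 A$ is non-increasing, the slope $a_j' = a_{j+1}-a_j$ is a concave function of $j$. I would transform this concavity into a point-curve incidence problem on the graph of $j \mapsto a_j'$ and apply a Szemer\'edi--Trotter-style bound, in the spirit of Shkredov's and Ol'mezov's cited prior works. This should yield an improved estimate $|P_\tau| \ll |A|^c / \tau^d$, with exponents which, after summation, produce the desired bound on $E(A)$.

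The main obstacle is the quantitative exploitation of $\Delta^3 A \leq 0$: Garaev's bound only uses positivity of $\Delta^2 A$, so one must leverage the monotonicity of $\Delta^2 A$ itself nontrivially. I expect this to go through a higher-energy quantity such as $T_3(A) = \sum_x r_{A-A}(x)^3$, bounded by a second-moment argument where the concavity of $\Delta A$, analysed over nested index intervals, prevents too many quadruples from realising common differences without forcing a contradiction with the monotonicity of $a_j''$. The stronger bound under $\Delta^4 A \leq 0$ should follow by iterating the same argument one further level: one additional layer of concavity on $\Delta^2 A$ gives a corresponding reduction in the exponent of $|P_\tau|$, producing $E(A) \ll |A|^{7/3}$ and hence $|A+A| \gg |A|^{5/3}$.
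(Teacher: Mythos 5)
Your high-level plan is numerically consistent: $E(A)\ll|A|^{12/5}$ and $E(A)\ll|A|^{7/3}$ would indeed give the exponents $8/5$ and $5/3$ via $|A+A|\ge|A|^4/E(A)$, and the first of these energy bounds really is Ol'mezov's (it is quoted in the paper). But there is a genuine gap: the entire quantitative content of your argument is delegated to a ``structural lemma'' $|P_\tau|\ll|A|^c/\tau^d$ whose exponents you never specify and whose proof you only gesture at (``I would transform this concavity into a point-curve incidence problem and apply a Szemer\'edi--Trotter-style bound'', ``I expect this to go through $T_3(A)$ \dots This should yield \dots''). That lemma \emph{is} the theorem. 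The input actually needed is Ol'mezov's decay estimate (\ref{Ol'mezov negative third fundamental}): after sorting the popular differences/sums, $A\circ A(s_j),\,A\ast A(s_j)\ll |A|\,j^{-3/8}$ under $\Delta^2A>0$, $\Delta^3A\le0$ (with a sharper exponent under $\Delta^4A\le0$), proved by an elementary Garaev-style geometric observation. Once one has it, the sumset bound follows in one line with no energy at all: $|A|^2=\sum_{j\le|A+A|}A\ast A(s_j)\ll|A|\,|A+A|^{5/8}$, hence $|A+A|\gg|A|^{8/5}$; and the energy route you propose requires exactly the same decay input, which you have not established. Your proposed mechanism for producing it is also doubtful: $G$ is an arbitrary totally ordered abelian group, so Szemer\'edi--Trotter-type incidence bounds are not available in this setting, and they are not what Garaev's or Ol'mezov's elementary arguments use.

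There is also a concrete misstatement that undermines the calibration of your sketch: you claim bare convexity ``essentially gives $|P_\tau|\ll|A|^2/\tau^2$''. If that held, dyadic summation would give $E(A)\ll|A|^2\log|A|$ and hence $|A+A|\gg|A|^2/\log|A|$, i.e.\ essentially the Erd\H{o}s--Hegyv\'ari conjecture, which is open. The correct bare-convexity input behind the $|A|^{3/2}$ bound is the third-energy estimate $E_3(A)\ll|A|^3\log|A|$, i.e.\ $|P_\tau|\ll|A|^3/\tau^3$. Since your baseline is wrong and your refined exponents $c,d$ are never written down, one cannot verify that the intended refinement would in fact deliver $E(A)\ll|A|^{12/5}$ (resp.\ $|A|^{7/3}$); as it stands the proposal does not constitute a proof of either assertion.
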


In this paper, we show a slightly better bound for convex sets with negative third derivative. 
\begin{theorem}\label{main theorem}
    Let $A \subset G$. If $\Delta^2 A > 0$ and $\Delta^3 A < 0$,
    \begin{equation}
        |A+A| \gg |A|^{\frac{221}{137}}.
    \end{equation}
\end{theorem}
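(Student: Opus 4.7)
The plan is to bound the additive energy $E(A)$ sharply and then recover the sumset estimate from the Cauchy--Schwarz inequality $|A|^{4}\le E(A)|A+A|$. Matching the exponent $221/137$ therefore amounts to proving $E(A)\ll|A|^{327/137}$, slightly beating the bound of $|A|^{12/5}$ that sits behind Ol'mezov's estimate \eqref{olmezov negative third result}.

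First I would apply the spectrum (popular difference) decomposition. Writing $r_{A-A}(x)=|\{(a,b)\in A\times A:a-b=x\}|$ and $P_{\tau}=\{x:r_{A-A}(x)\sim\tau\}$, the standard dyadic identity gives $E(A)\sim\sum_{\tau}\tau^{2}|P_{\tau}|$, so the task reduces to bounding $|P_{\tau}|$ at every dyadic level; a parallel bound on the third moment $\sum_{\tau}\tau^{3}|P_{\tau}|$ will also be needed, since it feeds back through the standard chain of higher-energy inequalities used in this literature.

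Second, I would convert $|P_{\tau}|$ into an incidence count. Because $A$ is convex, the graph $\Gamma=\{(i,a_{i})\}$ is strictly convex, and each $x\in A-A$ corresponds to a horizontal line meeting a translate of $\Gamma$ in $r_{A-A}(x)$ points. Szemer\'edi--Trotter applied to translates of $\Gamma$ is precisely the tool that yields Ol'mezov's $8/5$ exponent. The new ingredient is $\Delta^{3}A<0$: the derivative sequence $\Delta A$ is then strictly concave, so $\Gamma$ has a strictly monotone second difference. This extra regularity forces two distinct translates of $\Gamma$ to intersect in at most a bounded number of points, a higher-order Wenger-type property that replaces the generic Szemer\'edi--Trotter bound by a sharper curve-incidence estimate of the form
\begin{equation*}
    |P_{\tau}| \ll \frac{|A|^{\alpha}}{\tau^{\beta}} + \text{lower-order terms},
\end{equation*}
with $(\alpha,\beta)$ strictly better than the convex-only values.

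Third, I would feed the improved level-set bound back into the dyadic sum and couple $E(A)$ with the third moment via standard Katz--Koester-type inequalities, then optimise over the dyadic threshold. The resulting linear-programming problem has an extremal vertex whose denominator is $137$, yielding $E(A)\ll|A|^{327/137}$ and hence the claimed bound on $|A+A|$. The main obstacle, and the step I expect to require the most care, is making the improved incidence bound \emph{uniform} across dyadic levels: the hypothesis $\Delta^{3}A<0$ is purely qualitative, so one has to check that concavity of $\Delta A$ is inherited by the sub-sequences of $A$ that dominate each $P_{\tau}$, with constants that do not degrade under the decomposition. Once this uniformity is secured, the remaining bookkeeping is routine, and the peculiar denominator $137$ falls out of the final optimisation.
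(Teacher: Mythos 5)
There is a genuine gap, and it sits at the very first reduction. You propose to prove $E(A)\ll|A|^{327/137}$ and then finish with $|A|^{4}\le E(A)\,|A+A|$. But the energy bound that the machinery available here actually yields (Lemma \ref{E(A) new bound}) is $E(A)\ll|A|^{328/137}$, and Cauchy--Schwarz then gives only $|A+A|\gg|A|^{220/137}$, short of the claimed $221/137$. Your target $327/137$ is strictly stronger than anything established in this setting, and nothing in your sketch supplies it: the ``sharper curve-incidence estimate'' with exponents $(\alpha,\beta)$ is never stated, let alone proved, and the structural fact you invoke --- that translates of the convex graph meet in boundedly many points --- already holds for plain convex sequences, so it cannot be the extra leverage extracted from $\Delta^{3}A<0$. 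The quantitative gain from $\Delta^{3}A<0$ used in the paper is specific: all second differences of $A$ are distinct, so Lemma \ref{higher convexity T energy} bounds $T_4(A)$, which via $E(S_\tau)\le\tau^{-4}T_4(A)$ yields the new estimate $E(A,S_\tau)\ll\tau^{-2}|A|^{2}E^{7/8}(A)$ (Lemma \ref{E(A,S) bound}); this, combined with Ol'mezov's decay $A\circ A(s_j)\ll|A|j^{-3/8}$ and Li's bound $E(A,B)\ll|A||B|^{3/2}$, is what powers the argument --- not a new Szemer\'edi--Trotter-type level-set bound.

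The second missing idea is that the final exponent is not obtained by converting an energy bound through Cauchy--Schwarz at all. After establishing $E(A)\ll|A|^{328/137}$, the paper runs the spectrum method a second time directly on the sum side: pigeonhole a popular dyadic level $\Delta$ of $A\ast A$, consider the operator $T^{S_\Delta}_{A,A}$, and derive $\frac{|A|^{10}}{|A+A|^{2}}\ll E_3(A)\,\tau^{2}\Delta^{-1}E^{1/2}(A,S_\Delta)E^{1/2}(A,S_\tau)$, into which Lemma \ref{E(A,S) bound}, Li's bound, Ol'mezov's decay, and the new energy bound are fed. Without this second pass (or, alternatively, without a proof of the strictly stronger bound $E(A)\ll|A|^{327/137}$, for which you give no argument), your plan does not reach $|A|^{221/137}$; the ``linear program whose extremal vertex has denominator $137$'' cannot be audited because the input inequalities it is supposed to optimize over are never written down.
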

Although the third derivative has to be strictly negative unlike in (\ref{olmezov negative third result}), $221/137 \sim 1.61...$ is greater than $8/5$ by $0.01..$. This result was obtained by a fundamental inequality discovered by Ol'mezov in \cite{Olmezov2020AdditivePO}, Shkredov's spectrum method, and some inequalities involving higher discrete derivatives of sets given by Bradshaw, Hanson, and Rudnev \cite{Bradshaw2021HigherCA}.

In this paper, $\gg$ denotes the Vinogradov’s symbols: $X \gg Y$ if $\exists c > 0$ such that $X \ge cY$. 
By a slight abuse of notation we often ignore $\log$ terms and contain it within the Vinogradov's symbols. 
That is, we denote $X \gg Y$ meaning that $X \gg_d Y \log^d{|Y|}$ for some constant $d$. 

\section{\centering PRELIMINARY}
In this section, we provide some preliminary definitions and results.
First, let us introduce some more definitions and notations that are useful to state the lemmas.

\begin{definition}
    Let $f, g: G \rightarrow \mathbb{C}$. Then, the (difference/sum) convolutions are defined as follows.
    \begin{align*}
        f \circ g (x) &= \sum_{s} f(s)g(s+x), \\
        f \ast g (x) &= \sum_{s} f(s)g(x-s) .
    \end{align*}
\end{definition}

The following are natural generalizations of $E(A, B)$.
\begin{definition}
    Let $A, B \subset G$ and let $A$ denote its characteristic function. 
    The $k$'th additive energy, $E_{k}(A, B)$, between $A, B$ is
    \begin{align*}
        E_k(A, B) &= \#\{(a_1, b_1, a_2, b_2, ..., a_k, b_k) \in (A \times B)^{k}; \\
        &\qquad\qquad a_1 - b_1 = a_2 - b_2 = \cdots = a_k - b_k\} \\
        &= \sum_x (A \circ B (x))^k.
    \end{align*}
    Hereafter, $E_k(A)$ denotes $E_k(A, A)$ and $E(A, B)$ denotes $E_2(A, B)$. \\
    Let $A_1, ... A_k \subset G$. The $T$-additive energy, $T(A_1, ... A_k)$, is
    \begin{align*}
        T(A_1, ... A_k) &= \#\{(a_1, ..., a_k, a_1^{\prime}, ..., a_k^{\prime}) \in (A_1 \times \cdots \times A_k)^2; \\
        &\qquad\qquad a_1 + \cdots + a_k = a_1^{\prime} + \cdots + a_k^{\prime}\} \\
        &= \sum_x (A_1 \ast \cdots \ast A_k (x))^2.
    \end{align*}
    Similarly, $T_k(A)$ denotes $T(A, ..., A)$, $T$-additive energy between $k$-many copies of $A$.
\end{definition}

Recognize that $E(A) = \#\{(a_1, a'_1, a_2, a'_2) \in (A \times A)^{2}; a_1 - a'_1 = a_2 - a'_2 \} = \#\{(a_1, a'_1, a_2, a'_2) \in (A \times A)^{2}; a_1 + a'_2 = a_2 + a'_1 \} = T_2(A)$.

Recall the bound of $E(A, B)$ for convex $A$ and any $B$ given by Li \cite{li2011theoremschoenshkredovsumsets}.
\begin{lemma}[Li]\label{li E(A,B) A B3/2}
    Let $A, B \subset G$. If $A$ is convex,
    \begin{equation}
        E(A, B) \ll |A| |B|^{3/2}.
    \end{equation}
\end{lemma}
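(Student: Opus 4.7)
The plan is to express $E(A, B)$ as a pairing between difference representation functions and exploit the strict monotonicity forced by convexity. Starting from the identity $E(A,B) = \sum_{d} r_{A-A}(d)\, r_{B-B}(d)$ with $r_{A-A}(d) = |A \cap (A+d)|$, one parametrizes the positive contributions of $r_{A-A}$: each $d = a_{i+m} - a_{i} > 0$ arises from at most one index $i$ for each fixed $m \ge 1$, because $\Delta^{2} A > 0$ makes the map $i \mapsto a_{i+m} - a_{i}$ strictly increasing. Letting $D_{m} = \{a_{i+m} - a_{i} : 1 \le i \le |A|-m\}$, so that $|D_{m}| = |A| - m$ and $r_{A-A}(d) = |\{m \ge 1 : d \in D_{m}\}|$ for $d > 0$, one obtains
\[
E(A,B) = |A||B| + 2 \sum_{m=1}^{|A|-1} \sum_{d \in D_{m}} r_{B-B}(d).
\]

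The crux is to bound $\sum_{d \in D_{m}} r_{B-B}(d) \ll |B|^{3/2}$ uniformly in $m$, since summing over $m \in [1, |A|-1]$ then yields the target $|A|\,|B|^{3/2}$. The inner sum counts pairs $(b, b') \in B \times B$ with $b - b' \in D_{m}$, which I would view as incidences between $|B|^{2}$ points and a family of slope-one lines indexed by $D_{m}$. The strict monotonicity of $\phi_{m}(i) := a_{i+m} - a_{i}$ can be exploited to reparametrize this as an incidence count between $B \times B$ and the graphs $\{(i, \phi_{m}(i)) : 1 \le i \le |A|-m\}$, whose pairwise intersections are constrained by convexity and so admit a Szemer\'{e}di--Trotter-type estimate.

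The hard part will be this incidence step. A direct Cauchy--Schwarz gives only $\sum_{d \in D_{m}} r_{B-B}(d) \le |D_{m}|^{1/2} E(B)^{1/2} \le (|A|-m)^{1/2}|B|^{3/2}$, which sums to $|A|^{3/2}|B|^{3/2}$, off by a factor of $|A|^{1/2}$ from the target. Extracting a genuine pseudoline structure from $\{\phi_{m}\}_{m}$ is delicate because only $\Delta^{2} A > 0$ is assumed (not $\Delta^{3} A > 0$), so each $\phi_{m}$ is strictly increasing but need not itself be strictly convex. The remedy is either a cross-$m$ averaging argument that couples the per-$m$ monotonicity to a higher additive energy bound such as $E_{3}(A) \ll |A|^{3}$ for convex $A$ and interpolates via H\"{o}lder, or an application of the Szemer\'{e}di--Trotter theorem to a suitably tilted incidence configuration in the spirit of Iosevich--Konyagin--Rudnev--Ten; either route is the technical heart of the lemma.
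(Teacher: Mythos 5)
Your proposal stops short of a proof, and the step you leave open is not just unfinished but, as formulated, false. After the (correct) identity $E(A,B) = |A||B| + 2\sum_{m\ge 1}\sum_{d\in D_m} r_{B-B}(d)$, everything rests on the claim $\sum_{d\in D_m} r_{B-B}(d) \ll |B|^{3/2}$ \emph{uniformly in} $m$, which you defer (``either route is the technical heart of the lemma''). But convexity of $A$ only makes $\phi_m(i)=a_{i+m}-a_i$ strictly increasing, and $D_m$ can itself be an arithmetic progression: take $A$ with all second differences equal to $1$, e.g. $a_i=\binom{i}{2}$, so that $D_1=\Delta A$ is a block of consecutive integers, and let $B$ be an arithmetic progression of step $1$ (with $|A|\ge |B|$) positioned so that $B-B$ meets $D_1$ in $\sim |B|$ elements; then $\sum_{d\in D_1} r_{B-B}(d)\sim |B|^2 \gg |B|^{3/2}$. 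The lemma survives in this example only because the contributions for larger $m$ decay like $|B|^2/m$, i.e. the saving is global across $m$, not per $m$; so no uniform-in-$m$ bound can close the argument, and the cross-$m$ interplay you postpone is exactly where the entire difficulty sits. Your Cauchy--Schwarz fallback, as you note, loses a factor $|A|^{1/2}$, so the proposal as written does not establish the statement.

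For comparison: the paper does not prove this lemma at all --- it is quoted from Li --- and the known proof is a global level-set argument rather than a decomposition over $m$. One places $A$ on the graph of a convex function and bounds the number of $\tau$-rich shifts, $|\{x : r_{A-B}(x)\ge \tau\}| \ll |A||B|^2\tau^{-3}$, by a Szemer\'edi--Trotter-type incidence count between a grid built from $B$ and translates of the convex curve, any two of which meet in at most two points (the Iosevich--Konyagin--Rudnev--Ten / Elekes-style mechanism you allude to). Then $E(A,B)=\sum_x r_{A-B}(x)^2$ is summed dyadically over levels with a split at $\tau_0=|B|^{1/2}$: rich levels give $\ll |A||B|^2/\tau_0 = |A||B|^{3/2}$ and poor levels give $\ll \tau_0 \sum_x r_{A-B}(x) = |A||B|^{3/2}$. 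So your ``route (b)'' is the right instinct, but it must be applied to the level sets of $r_{A-B}$, not to the sets $D_m$; your ``route (a)'' ($E_3$ plus H\"older) would need bounds of essentially the same strength as the lemma itself to get started. As it stands, the proposal identifies the hard step but neither proves it nor formulates it in a way that could be proved.
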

Due to its generality, this bound proved to be useful. 
However, when $A=B$, Shkredov noted that the upper bound can be improved
using the spectrum method. 
This method makes use of the following positive definite operators and inequalities on their principle eigenvalues \cite{shkredov2012somenewineq}, \cite{shkredov2013new}, \cite{Shkredov2015OnSO}. 
\begin{definition}
    Let $g: G \rightarrow \mathbb{C}$ and $A, B \subset G$. Define operators as
    \begin{align*}
        T_{A, B}^{g}(x, y) &= A(x)B(y)g(x-y), \\
        \tilde{T}_{A, B}^{g}(x, y) &= A(x)B(y)g(x+y).
    \end{align*}
\end{definition}
In \cite{Shkredov2015OnSO}, for example, Shkredov used $g = A\circ A$ and an inequality obtained from Rayleigh quotient to show
\begin{align}\label{spectrumexample}
    \frac{|A|^{10}}{|A+A|^2} \ll E_3(A) E_3(A, A, S_1),
\end{align}
where $S_1 = \{x;~ \tau \le A \ast A (x) \le 2\tau \}$ such that $\tau \gg |A|^2/|A+A|$ and
$|A|^2 \ll \sum_{x \in S_1} (A \ast A)^2 (x)$,
and $E_3(A, A, S_1) = \sum_x (A \circ A)(x) (A \circ A)(x) (S_1 \circ S_1)(x)$.
Such $\tau$ is shown to exist using the dyadic pigeonhole principle. 

It is worth mentioning Ol'mezov proposed a graph theoretic interpretation of (\ref{spectrumexample}).
He discovered that this type of inequalities can be derived from partially proved Sidorenko's conjecture, 
which bounds the number of bipartite subgraphs \cite{OlmezovElementaryApproach}.

Ol'mezov showed that negative third/forth derivative makes it easier to prove higher bounds \cite{Olmezov2020AdditivePO}. To prove (\ref{olmezov negative third result}) and (\ref{olmezov negative forth result}), he made some geometric observation analogous to Garaev's proof. 
\begin{lemma}[Ol'mezov]
    Let $A \subset G$ such that $\Delta^2 A > 0, \Delta^3 A \le 0$. 
    Sort the elements $\{s_i; i \in \mathbb{N}\} = A-A,~A+A$ such that $A \circ A (s_{i}) \ge A \circ A (s_{i+1}), ~ A \ast A (s_{i}) \ge A \ast A (s_{i+1})$ respectively. Then,
    \begin{align}\label{Ol'mezov negative third fundamental}
        A \circ A (s_j) &\ll |A|j^{-\frac{3}{8}}, \\
        A \ast A (s_j) &\ll |A|j^{-\frac{3}{8}}.
    \end{align}
\end{lemma}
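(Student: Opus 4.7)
The plan is to reduce both inequalities to a single partial-sum estimate on the rearranged popularity function, and then to exploit the geometry of the first-differences sequence of $A$. Writing $A = \{a_1 < \cdots < a_n\}$ and $d_i := a_{i+1} - a_i$, the assumption $\Delta^2 A > 0$ makes $d_i$ strictly increasing, while $\Delta^3 A \le 0$ makes $d_i$ concave. Setting $f(i,k) := a_{i+k} - a_i = d_i + d_{i+1} + \cdots + d_{i+k-1}$, the strict positivity of $d$ shows that $i \mapsto f(i,k)$ is strictly increasing for each fixed $k$; consequently every $s \in A - A$ admits at most one representation $s = a_{i+k} - a_i$ per value of $k > 0$, so $A \circ A(s) = |K(s)|$ with $K(s) := \{k > 0 : \exists\, i,\ s = f(i,k)\}$. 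A symmetric reduction applies to $A \ast A$ via $g(i,k) := a_i + a_{i+k}$, which under the hypotheses is strictly increasing in both variables.

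Writing $r_j := A \circ A(s_j)$, the monotonicity $r_1 \ge r_2 \ge \cdots$ together with the rearrangement inequality $j \cdot r_j^2 \le 2 \sum_{j' \le j} r_{j'}^2$ shows that the target pointwise bound $r_j \ll n\, j^{-3/8}$ is equivalent, up to constants, to the partial-sum estimate $\sum_{j' \le J} r_{j'}^2 \ll n^2 J^{1/4}$ for every $J$. Convexity alone delivers Li's global bound $E(A) = \sum_{j'} r_{j'}^2 \ll n^{5/2}$, which matches this at $J \sim |A - A|$ but yields no refinement below; the refinement must come from $\Delta^3 A \le 0$. I would obtain it by double counting coincidences
\begin{equation*}
    \sum_{m = i_1}^{i_1 + k_1 - 1} d_m \;=\; \sum_{m = i_2}^{i_2 + k_2 - 1} d_m
\end{equation*}
among representations of the top $J$ differences, exploiting that concavity and monotonicity of $d$ pin down $(i_2 - i_1,\, k_2 - k_1)$ rather tightly: two windows of a concave increasing sequence with equal sums cannot differ much in either starting position or length.

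The main obstacle is converting this rigidity into the precise saving $J^{1/4}$. Two natural avenues present themselves: an incidence-geometric formulation viewing the curves $\gamma_k : i \mapsto (i, f(i,k))$ as a family of graphs which are concave in $i$ and convex in $k$, then invoking a Szemer\'edi--Trotter-type bound (or its pseudo-parabola variant) for incidences with the horizontal lines $\{y = s_{j'}\}_{j' \le J}$; or alternatively a Cauchy--Schwarz reorganisation in which the concavity of $d$ plays the role of a variance-type inequality on the window sums, feeding into the spectrum-method operators $T^{g}_{A,A}$ from the preliminaries. Once the partial-sum bound is established, the claim for $A \ast A$ follows by applying the same argument with $f$ replaced by $g$.
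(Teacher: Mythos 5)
There is a genuine gap: the heart of the lemma is missing. Your reductions are fine but they are the easy part --- the unique-representation-per-$k$ observation (strict monotonicity of $i \mapsto a_{i+k}-a_i$ for convex $A$, and trivially of $i \mapsto a_i + a_{i+k}$) and the equivalence of the pointwise bound $r_j \ll |A| j^{-3/8}$ with the partial-sum bound $\sum_{j' \le J} r_{j'}^2 \ll |A|^2 J^{1/4}$ (equivalently, the level-set bound $\#\{s : A\circ A(s) \ge \tau\} \ll |A|^{8/3}\tau^{-8/3}$) use only monotonicity of the rearranged sequence. The entire content of the lemma is the step you label ``the main obstacle'': the quantitative exploitation of $\Delta^3 A \le 0$ to extract the exponent $3/8$. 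You do not carry out either of the two avenues you sketch, so nothing in the proposal actually uses the concavity of the gap sequence $d_i$ beyond the qualitative remark that equal window sums ``pin down'' the parameters. Note also that the paper itself does not prove this statement; it is quoted from Ol'mezov, whose argument is an elementary Garaev-style geometric count in which the nested sums of the increasing, concave sequence $d_i$ force many distinct smaller differences/sums once a single element has $t$ representations, yielding precisely the level-set bound above.

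Beyond incompleteness, the two proposed routes are doubtful as stated. A Szemer\'edi--Trotter (or pseudo-line/convex-curve) incidence bound applied to the curves $i \mapsto a_{i+k}-a_i$ against popular levels is exactly the standard argument for convex sets, and it only gives $\#\{s : r(s) \ge \tau\} \ll |A|^3 \tau^{-3}$, i.e.\ the decay $r_j \ll |A| j^{-1/3}$; getting from $-1/3$ to $-3/8$ is the whole point of the hypothesis $\Delta^3 A \le 0$, and you would need to explain concretely how concavity of $d$ enters an incidence count, which the sketch does not do. The second avenue (feeding concavity into the operators $T^{g}_{A,A}$) conflates the roles of the ingredients in this paper: the spectrum method is what the paper applies \emph{on top of} this lemma to bound $E(A)$ and $|A+A|$, not a mechanism for proving the lemma, and no inequality is indicated that would turn a ``variance-type'' statement about window sums into the exponent $8/3$ on the level sets. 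To repair the proposal you would need to carry out the double-counting of coincident window sums explicitly, e.g.\ by showing that a difference with $t$ representations $a_{i+k}-a_i$ produces $\gg t^{8/3}$ distinct elements of $A-A$ below it (and similarly for sums), which is where both $\Delta^2 A > 0$ and $\Delta^3 A \le 0$ must be used quantitatively.
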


For sets with higher derivative, Bradshaw et al. also discovered a sharper inequality of $T$-additive energy \cite{Bradshaw2021HigherCA}. 
\begin{lemma}[Bradshaw et al.]\label{higher convexity T energy}
    Let $A = \left\{a_1 < a_2 < \cdots < a_{n}\right\} \subset G$ be a convex set such that $\forall i \neq j \in \{1, ..., n-2\}, a_{i+2} - 2a_{i+1} + a_{i} \neq a_{j+2} - 2a_{j+1} + a_{j}$. Then,
    \begin{equation}
        T_4(A) \ll |A|^3T_2(A) + |A|^4T_2^{\frac{3}{4}}(A),
    \end{equation}
\end{lemma}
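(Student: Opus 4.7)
The plan is to reduce the $T_4$ estimate to a pointwise bound on the quadruple representation function $r_{4A}(x) := (A \ast A \ast A \ast A)(x)$ and then control that bound via dyadic decomposition and an incidence argument exploiting the distinct second differences. Starting from $T_4(A) = \sum_x r_{4A}(x)^2$ and $\sum_x r_{4A}(x) = |A|^4$, I immediately get $T_4(A) \le |A|^4 \max_x r_{4A}(x)$, so it suffices to prove
\begin{equation*}
    \max_x r_{4A}(x) \ll T_2(A)/|A| + T_2(A)^{3/4};
\end{equation*}
multiplying by $|A|^4$ then recovers the two terms of the lemma.

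Next, fixing $x$, I would write $r_{4A}(x) = \sum_y r_{2A}(y)\, r_{2A}(x - y)$ and split dyadically: for each $\tau > 0$, set $P_\tau = \{y : \tau \le r_{2A}(y) < 2\tau\}$, so that Parseval $\sum_y r_{2A}(y)^2 = T_2(A)$ gives $|P_\tau| \le T_2(A)/\tau^2$. The contribution from scales $(\tau_1, \tau_2)$ is at most $\tau_1 \tau_2 N_x(\tau_1, \tau_2)$, where $N_x(\tau_1, \tau_2) := \#\{(y_1, y_2) \in P_{\tau_1} \times P_{\tau_2} : y_1 + y_2 = x\}$. The diagonal scale $\tau_1 = \tau_2 \sim 1$ would produce the $T_2(A)/|A|$ term, while the generic off-diagonal balancing should produce the $T_2(A)^{3/4}$ term.

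The crux would be bounding $N_x(\tau_1, \tau_2)$ using the distinct-second-differences property. Each $y_i \in P_{\tau_i}$ admits many representations $y_i = a_k + a_\ell$, corresponding to lattice points $(k, \ell)$ on the convex curve $\{(k, a_k)\}$; the injectivity of $\Delta^2 A$ forces a genericity condition on this curve strong enough to rule out Elekes--Ronyai-type degeneracies, permitting a Szemer\'edi--Trotter-style incidence bound that excludes excessive concentration of sumset coincidences. Combined with the dyadic size estimates, this should produce the desired pointwise estimate after balancing scales.

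The main obstacle is this last step: converting the distinct-second-differences hypothesis into a quantitative incidence bound strong enough to produce the $3/4$ exponent on $T_2(A)$. Concretely, one must show that parallelogram-like equalities $a + b = c + d$ on the convex curve, together with the additional sum constraint $a + b + c + d = x$, cannot accumulate; this is where higher convexity beyond mere convexity plays its essential role, and it is the technical heart of the Bradshaw--Hanson--Rudnev approach.
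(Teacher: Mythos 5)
First, a point of orientation: the paper does not prove this lemma at all; it is imported verbatim from Bradshaw--Hanson--Rudnev \cite{Bradshaw2021HigherCA}, whose argument bounds the full solution count $T_4(A)$ directly (passing to consecutive differences, where the distinct-second-difference hypothesis becomes an injectivity statement, and applying a Szemer\'edi--Trotter-type count to the aggregate number of coincidences over all popular sums). Your plan is genuinely different, but it has a gap, and it sits exactly where you admit it does. The reduction $T_4(A) \le |A|^4 \max_x r_{4A}(x)$ commits you to the uniform pointwise bound $\max_x r_{4A}(x) \ll T_2(A)/|A| + T_2(A)^{3/4}$, and since $T_2(A) \le |A|^3$ the second term always dominates, so you are claiming that \emph{every} $x$ has $\ll T_2^{3/4}(A)$ representations. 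That $L^\infty$ statement is much stronger than the $L^2$ statement of the lemma, which only needs large values of $r_{4A}$ to be rare and tolerates individual $x$ with up to roughly $|A|^2 T_2^{3/8}(A)$ representations. It is not at all clear the pointwise claim is even true: nothing in the hypothesis obviously excludes a convex set with distinct second differences, near-minimal energy $T_2(A) \approx |A|^2$, and a single value $x_0$ with $r_{2A}(x_0) \gg |A|^{3/4}$, in which case $r_{4A}(2x_0) \ge r_{2A}(x_0)^2$ would already violate your claimed bound while the lemma itself survives.

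The step you would need to make this work is precisely the one you leave open, and as stated it cannot be filled in by the tools you name. For a \emph{fixed} $x$, the only information you have about $N_x(\tau_1,\tau_2)$ is $N_x(\tau_1,\tau_2) \le \min(|P_{\tau_1}|,|P_{\tau_2}|) \le \min(T_2/\tau_1^2, T_2/\tau_2^2)$, and balancing $\tau_1\tau_2 N_x(\tau_1,\tau_2)$ over dyadic scales then returns only the trivial Cauchy--Schwarz bound $r_{4A}(x) \le T_2(A)$ (up to logarithms). To beat this you would need an incidence estimate for the single linear constraint $y_1+y_2=x$, but Szemer\'edi--Trotter-type inequalities control aggregate incidence counts over many points and many curves; in this problem they naturally bound sums over $x$ --- that is, exactly the second-moment quantity $T_4$ you discarded at the first step --- and give nothing nontrivial for one fixed $x$. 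The distinct-second-difference hypothesis enters your sketch only as a hope that it rules out Elekes--R\'onyai-type degeneracies, with no specified point-curve system and no quantitative statement, so the argument is incomplete at its technical heart, and the overall strategy of reducing to a pointwise bound is the wrong level at which to exploit that hypothesis.
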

In \cite{Bradshaw2021HigherCA}, the main focus is on $A$ such that $\Delta^3 A > 0$, but it is also mentioned that the condition can be relaxed as
stated above, which enables us to use it for sets with $\Delta A < 0$.

\section{\centering PROOF OF MAIN THEOREM}
We first prove a new bound for $E(A, S)$ and use it to prove a slightly better upper bound for 
the second energy, and lower bound for the sumset.
\begin{lemma}\label{E(A,S) bound}
    Let $A \subset G$ be a convex set such that $\Delta^3 A < 0$. Let \\
    $S_{\tau} = \left\{s \in A-A; ~ A \circ A (s) \ge \tau\right\}$. Then,
    \begin{equation}
        E(A, S_{\tau}) \ll \tau^{-2} |A|^2 E^{\frac{7}{8}}(A).
    \end{equation}
\end{lemma}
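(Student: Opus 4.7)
The plan is to reduce $E(A,S_\tau)$ to the third moment $T_3(A)$, then control $T_3(A)$ via $T_4(A)$ using Lemma~\ref{higher convexity T energy}.

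\textbf{Step 1 (from $E(A,S_\tau)$ to $T_3(A)$).} By the definition of $S_\tau$, the pointwise inequality $\tau\,S_\tau(y)\le(A\circ A)(y)$ holds for every $y$. Multiplying by $A(s)$ and summing in $s$ yields
\[\tau(A\circ S_\tau)(x)=\sum_s A(s)\,\tau\,S_\tau(s+x)\le\sum_s A(s)(A\circ A)(s+x)=(A\circ(A\circ A))(x).\]
Squaring and summing in $x$: since $(A\circ(A\circ A))(x)$ counts triples $(a_1,a_2,a_3)\in A^3$ with $a_3-a_2-a_1=x$, the sum $\sum_x(A\circ(A\circ A))^2(x)$ counts $6$-tuples in $A^6$ satisfying $a_3-a_2-a_1=a_3'-a_2'-a_1'$, i.e.\ $a_3+a_1'+a_2'=a_3'+a_1+a_2$, which after relabeling is $T_3(A)$. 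Thus $\tau^2 E(A,S_\tau)\le T_3(A)$.

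\textbf{Step 2 (from $T_3(A)$ to $T_4(A)$).} Using $(A*A*A)=(A*A)*A$, write $T_3(A)$ as the inner product
\[T_3(A)=\sum_y(A*A)(y)\,F(y),\qquad F(y)=\sum_z A(z)(A*A*A)(z+y),\]
where $F(y)$ counts $4$-tuples $(z,a_1,a_2,a_3)\in A^4$ with $a_1+a_2+a_3-z=y$. Cauchy--Schwarz in $y$ gives $T_3(A)^2\le T_2(A)\sum_y F(y)^2$, and expanding shows $\sum_y F(y)^2$ counts $8$-tuples in $A^8$ with $a_1+a_2+a_3+z'=a_1'+a_2'+a_3'+z$, which is exactly $T_4(A)$. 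Hence $T_3(A)^2\le T_2(A)\,T_4(A)$.

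\textbf{Step 3 (apply Bradshaw--Hanson--Rudnev and conclude).} Because $\Delta^3 A<0$ is strict, the sequence $(a_{i+2}-2a_{i+1}+a_i)_i$ of second differences is strictly decreasing, hence pairwise distinct, so the hypothesis of Lemma~\ref{higher convexity T energy} is satisfied and $T_4(A)\ll|A|^3 T_2(A)+|A|^4 T_2(A)^{3/4}$. Substituting,
\[T_3(A)^2\ll|A|^3 E(A)^2+|A|^4 E(A)^{7/4},\qquad T_3(A)\ll|A|^{3/2}E(A)+|A|^2 E(A)^{7/8}.\]
The trivial bound $E(A)\le|A|^3$ gives $E(A)^{1/8}\le|A|^{3/8}<|A|^{1/2}$, so the second term dominates and $T_3(A)\ll|A|^2 E(A)^{7/8}$. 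Combined with Step 1, this yields $E(A,S_\tau)\le\tau^{-2}T_3(A)\ll\tau^{-2}|A|^2 E(A)^{7/8}$, as claimed.

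I expect no real obstacle in this plan: the manipulations of Steps 1--2 are standard convolution identities followed by Cauchy--Schwarz, and the only point requiring care is verifying the hypothesis of Lemma~\ref{higher convexity T energy}, which follows immediately from the strict inequality $\Delta^3 A<0$. Notably, Ol'mezov's pointwise bound on $A\circ A$ is not needed for this lemma; it will enter only when Lemma~\ref{E(A,S) bound} is combined with other inputs to prove Theorem~\ref{main theorem}.
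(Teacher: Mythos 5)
Your proof is correct and is essentially the paper's argument with the two main steps interchanged: the paper first applies Cauchy--Schwarz as $E(A,S_\tau)\le E^{1/2}(A)\,E^{1/2}(S_\tau)$ and then uses $S_\tau\le\tau^{-1}(A\circ A)$ to get $E(S_\tau)\le\tau^{-4}T_4(A)$, whereas you use the pointwise bound first (reaching $\tau^{-2}T_3(A)$) and then Cauchy--Schwarz in the form $T_3^2(A)\le T_2(A)T_4(A)$, so both routes arrive at the same intermediate bound $\tau^{-2}E^{1/2}(A)T_4^{1/2}(A)$ and finish with Lemma~\ref{higher convexity T energy}. Your extra checks (that strict $\Delta^3 A<0$ gives the distinct-second-differences hypothesis, and that the $|A|^4T_2^{3/4}(A)$ term dominates since $E(A)\le|A|^3$) are correct and in fact slightly more careful than the paper, which passes over both points silently.
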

\begin{proof}
    By Cauchy-Schwartz inequality, the left side is bounded by $E^{\frac{1}{2}}(A) E^{\frac{1}{2}}(S_{\tau})$. 
    For $E(S_{\tau})$, one has
    \begin{align*}
        E(S_{\tau}) 
        &\le \tau^{-4} \#\{(a_1, b_1, a'_1, b'_1, a_2, b_2, a'_2, b'_2) \in A^8; \\
        &\qquad\qquad a_1 - b_1 + a'_1 - b'_1 = a_2 - b_2 + a'_2 - b'_2\} \\
        &= \tau^{-4} T_4(A),
    \end{align*}
    where the inequality follows by definition of $S_{\tau}$.
    
    Futhermore, using Lemma \ref{higher convexity T energy}, 
    \begin{align}
        E(S_{\tau}) \le \tau^{-4}T_4(A) \ll \tau^{-4}|A|^4T_2^{\frac{3}{4}}(A).
    \end{align}
\end{proof}
Using Lemma \ref{E(A,S) bound} in combination with the spectrum method, Ol'mezov's bound for the second energy, $|A|^{\frac{12}{5}}$ can be improved.
\begin{lemma}\label{E(A) new bound}
    Let $A \subset G$ be a convex set with negative third derivative. Then,
    \begin{equation}
        E(A) \ll |A|^{\frac{328}{137}}.
    \end{equation}
\end{lemma}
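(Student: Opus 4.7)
The plan is to refine Ol'mezov's bound $E(A) \ll |A|^{12/5}$ by feeding Lemma \ref{E(A,S) bound} into the spectrum method, using Ol'mezov's sorting estimate to control all dyadic level sets.

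First, I would apply dyadic pigeonholing: writing $E(A) = \sum_x (A \circ A)^2(x)$ and isolating a threshold $\tau$ for which $E(A) \ll \tau^2 |S_\tau|$ up to logarithmic factors, where $S_\tau = \{x : A \circ A(x) \ge \tau\}$. Ol'mezov's pointwise bound $A \circ A(s_j) \ll |A| j^{-3/8}$ yields two key constraints: $|S_\tau| \ll (|A|/\tau)^{8/3}$, and, summing cubes, the global estimate $E_3(A) \ll |A|^3$ (since $\sum_j j^{-9/8}$ converges, an observation that is already interesting on its own and is not available without negative third derivative).

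Next, I would invoke the spectrum inequality (\ref{spectrumexample}) in the form $|A|^{10}/|A+A|^2 \ll E_3(A) \cdot E_3(A, A, S_1)$ for the dyadic level set $S_1$ of $A \ast A$ at threshold $\tau' \gg |A|^2/|A+A|$. Bounding the right-hand side using $E_3(A) \ll |A|^3$ and the elementary estimate $E_3(A, A, S_1) \le |A| E(A, S_1)$, then applying the analogue of Lemma \ref{E(A,S) bound} for level sets of $A \ast A$ (whose proof transfers verbatim since the underlying Bradshaw--Hanson--Rudnev bound concerns $T_4(A)$), produces an inequality of the schematic shape $|A|^{8} \ll |A+A|^4 E(A)^{7/8}$.

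Finally, I would combine this with Cauchy--Schwarz $|A+A| \cdot E(A) \gg |A|^4$, the dyadic relation $E(A) \ll \tau^2 |S_\tau|$, and Ol'mezov's constraint $|S_\tau| \ll (|A|/\tau)^{8/3}$, and optimize the resulting system of polynomial inequalities in the exponents of $E(A)$, $|A+A|$, $\tau$, and $|S_\tau|$, at the critical dyadic level near $\tau \asymp |A|^{2/5}$, to extract $E(A) \ll |A|^{328/137}$.

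The principal obstacle is the final optimization. Since $328/137 \approx 2.3942$ improves Ol'mezov's $12/5 = 2.4$ by only $1/685$ in the exponent, the constraints must be balanced very sharply and none of the naive two-step combinations of Lemma \ref{E(A,S) bound} with Ol'mezov suffice. The improvement seems to arise from applying Lemma \ref{E(A,S) bound} at a dyadic level that does not saturate Ol'mezov's $|S_\tau|$-bound, so that the spectrum inequality propagates the small additional saving; identifying this precise configuration and executing the linear-programming-style bookkeeping is the delicate part of the argument.
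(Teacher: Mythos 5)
There is a genuine gap, and it lies in the central spectrum step. The paper's proof of this lemma never touches $|A+A|$: it pigeonholes a popular dyadic level $\Delta$ of $A\circ A$ (using Ol'mezov's pointwise bound $A\circ A(s_j)\ll |A|j^{-3/8}$ to confine $\Delta$ to the range $E(A)/|A|^2 \ll \Delta \ll |A|^4/E^{3/2}(A)$, since the very popular levels contribute only $O(E(A))$ with small constant), and then applies the spectrum method to the operator $T_{A,A}^{(A\circ A)\cdot S_\Delta}$, which produces an inequality whose \emph{left-hand side is a high power of the energy}, namely $E^6(A)/|A|^6 \ll E_3(A)\,\Delta^3\tau^2\,E^{1/2}(A,S_\Delta)\,E^{1/2}(A,S_\tau)$. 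Feeding Lemma \ref{E(A,S) bound}, Li's bound, $\tau^k|S_\tau|\le E_k(A)$, and the Ol'mezov-derived facts $E_3(A)\ll|A|^3$, $E_{8/3}(A)\ll|A|^{8/3}$ into this yields $E^{137/16}(A)\ll |A|^{41/2}$, i.e.\ the stated exponent. Your auxiliary observations ($E_3(A)\ll|A|^3$, $|S_\tau|\ll(|A|/\tau)^{8/3}$, and the fact that Lemma \ref{E(A,S) bound} transfers to level sets of $A\ast A$ because the proof only uses $T_4(A)$) are all correct and in the spirit of the paper.

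The problem is that you instead invoke the sumset-type inequality (\ref{spectrumexample}), whose left-hand side is $|A|^{10}/|A+A|^2$, and then try to close the loop with Cauchy--Schwarz $|A+A|\,E(A)\gg|A|^4$. Both of these only bound $|A+A|$ \emph{from below}; your schematic conclusion $|A|^8\ll|A+A|^4E^{7/8}(A)$ is again just a lower bound on $|A+A|$ in terms of $E(A)$ (roughly $|A+A|\gg|A|^2E^{-7/32}(A)$), and there is no upper bound on $|A+A|$ available to convert it into an upper bound on $E(A)$. Concretely, the whole system you propose (your schematic inequality, Cauchy--Schwarz, $E(A)\ll\tau^2|S_\tau|$, $|S_\tau|\ll(|A|/\tau)^{8/3}$) is simultaneously satisfiable with $E(A)\asymp|A|^{12/5}$, so no linear-programming optimization over it can ever produce an exponent below $12/5$; the last two constraints alone reproduce Ol'mezov's bound and the first two add nothing toward an energy upper bound. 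The missing idea is precisely to run the spectrum method with $g=(A\circ A)\cdot S_\Delta$ on a popular level set of $A\circ A$, so that $E(A)$ itself appears (to the sixth power) on the left, after which Lemma \ref{E(A,S) bound} delivers the gain.
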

\begin{proof}    
    By (\ref{Ol'mezov negative third fundamental}),
    \begin{equation*}
        \sum_{\frac{|A|^4}{E^{3/2}(A)} \ll A \circ A (x)} (A \circ A)^2 (x)
        \ll |A|^2 \sum_{j=1}^{c \frac{E^4(A)}{|A|^8}} j^{-3/4} 
        \ll E(A),
    \end{equation*}
    where $c > 0$ is some constant. 
    On the other hand, $\sum_{A \circ A (x) \ll \frac{E(A)}{|A|^2}} (A \circ A)^2 (x) \ll E(A)$. 
    Therefore, 
    By pigeonhole principle, there exists $\Delta$ such that $\frac{E(A)}{|A|^2} \ll \Delta \ll \frac{|A|^4}{E^{3/2}(A)}$ and $\sum_{\Delta \le A \circ A (x) \le 2\Delta} (A \circ A (x))^ 2 \gg E(A)$,
    where the factor of $\log_2 |A| $ is contained in the Vinogradov's symbol.
    
    Let $S_{\Delta} = \{x; \Delta \le A \circ A (x) \le 2\Delta\}$. 
    By considering $T_{A, A}^{(A \circ A) \cdot S_{\Delta}} (x, y)$ and following a similar derivation to \cite{Shkredov2015OnSO}, \cite{Olmezov2020SharpeningAE},
    we obtain
    \begin{equation}
        \frac{E^6(A)}{|A|^6} \ll E_3(A) \Delta^3 \tau^2 E^{\frac{1}{2}}(A, S_{\Delta}) E^{\frac{1}{2}}(A, S_{\tau}).
    \end{equation}
    Applying Lemma \ref{li E(A,B) A B3/2} on $E^{\frac{1}{2}}(A, S_{\tau})$ and Lemma \ref{E(A,S) bound} on $E^{\frac{1}{2}}(A, S_{\Delta})$, and using the fact that $\tau^{k}|S_{\tau}| \le E_k(A)$,

    \begin{equation}
        \frac{E^6(A)}{|A|^6} \ll E_3(A) \Delta^2 E_{8/3}^{\frac{3}{4}}(A) |A|^{\frac{3}{2}} E^{\frac{7}{16}}(A).
    \end{equation}
    The bounds of $\Delta$ and (\ref{Ol'mezov negative third fundamental}) gives the required inequality.
\end{proof}

With this new bound, the sumset of such sets is also sharpened.
\begin{proof}[Proof of Theorem \ref{main theorem}]
    Similarly to the proof of Lemma \ref{E(A) new bound}, $\sum_{\frac{|A|^2}{|A+A|} \ll A \circ A (x) \ll |A|^{\frac{2}{5}}} (A \ast A)^2 (x) \gg |A|^2$,
    and by pigeonhole principle, there exists $\Delta$ such that $\frac{|A|^2}{|A+A|} \ll \Delta \ll |A|^{\frac{2}{5}}$ and $\sum_{\Delta \le A \ast A (x) \le 2\Delta} (A \ast A (x))^ 2 \gg |A|^2$
    up to a factor of $\log |A|$.
    Let $S_{\Delta} = \{x; \Delta \le A \ast A (x) \le 2\Delta\}$. 
    By considering $T_{A, A}^{S_{\Delta}} (x, y)$ and following a similar derivation to \cite{Shkredov2015OnSO}, \cite{Olmezov2020SharpeningAE},
    we obtain
    \begin{equation}
        \frac{|A|^{10}}{|A+A|^2} \ll E_3(A) \tau^2 \Delta^{-1} E^{\frac{1}{2}}(A, S_{\Delta}) E^{\frac{1}{2}}(A, S_{\tau}).
    \end{equation}
    Applying Lemma \ref{li E(A,B) A B3/2} on $E^{\frac{1}{2}}(A, S_{\tau})$ and Lemma \ref{E(A,S) bound} on $E^{\frac{1}{2}}(A, S_{\Delta})$, and using the fact that $\tau^{k}|S_{\tau}| \le E_k(A)$,
    \begin{equation}
        \frac{|A|^{10}}{|A+A|^2} \ll E_3(A) \Delta^{-2} E_{8/3}^{\frac{3}{4}}(A) |A|^{\frac{3}{2}} E^{\frac{7}{16}}(A).
    \end{equation}
    The bounds of $\Delta$ and (\ref{Ol'mezov negative third fundamental}) and Lemma \ref{E(A) new bound} gives the required inequality.
\end{proof}

\section{\centering ACKKNOWLEDGEMENTS}
The author would like to express gratitude to Prof. Nets H. Katz and Prof. Shukun Wu for their invaluable guidance, insightful discussions, and constructive suggestions throughout this research. Appreciation is also extended to the Student Faculty Program at Caltech and its donors for providing the funding necessary to carry out this study.

Finally, the author wishes to thank the anonymous reviewers for their thoughtful comments and recommendations, which significantly contributed to improving the quality of this work.

\printbibliography 

\end{document}